\newtheorem{lem}{Lemma}[section]
\newtheorem{prop}[lem]{Proposition}
\newtheorem{thm}[lem]{Theorem}
\newtheorem{cor}[lem]{Corollary}
 \newtheorem{nott}{Notation}
\newtheorem{rem}{Remark}
\theoremstyle{definition}
 \DeclareMathAlphabet{\mathsfsl}{OT1}{cmss}{m}{sl}
 \newcommand{\Rnum}{\mathbb{R}}
 \newcommand{\Nnum}{\mathbb{N}}
 \newcommand{\dif}{\mathrm{d}}
 \newcommand{\tensor}[1]{\mathsf{#1}}
 \newcommand{\set}[1]{\left\{#1\right\}}
 \newcommand{\innp}[1]{\langle {#1}\rangle}
\numberwithin{equation}{section}
\begin{document}

\title{\bf{On the Jordan decomposition for a class of non-symmetric Ornstein-Uhlenbeck operators}}
\author{\rm\small
\noindent CHEN Yong\\
\noindent \footnotesize School of Mathematics and Computing Science, Hunan
University of Science and Technology,\\
\noindent \footnotesize Xiangtan, Hunan, 411201,
P.R.China. chenyong77@gmail.com\\
\rm\small \noindent LI Ying\\
\noindent \footnotesize  School of Mathematical Sciences,\ Beijing Normal University, \
\\
\noindent \footnotesize Beijing,  {\rm 100875}, P.R.China.
yingli618@gmail.com\\
}
\date{}
 \maketitle
\maketitle \noindent {\bf Abstract } \\
In this paper, we calculate the Jordan decomposition (or say, the Jordan canonical form) for a class of non-symmetric Ornstein-Uhlenbeck operators with the drift coefficient matrix being a Jordan block and the diffusion coefficient matrix being identity multiplying a constant. For the 2-dimensional case, we present all the general eigenfunctions by the induction. For the 3-dimensional case, we divide the calculating of the Jordan decomposition into several steps (the key step is to do the canonical projection onto the homogeneous Hermite polynomials, next we use the theory of systems of linear equations). As a by-pass product, we get the geometric multiplicity of the eigenvalue of the Ornstein-Uhlenbeck operator.

\maketitle

\section{Introduction}

We consider the d-dimensional ($d\geqslant 2$) nonsymmetric Ornstein-Uhlenbeck process
\begin{align}
\begin{bmatrix}
\dif X_1(t)\\ \dif X_2(t)\\ \dots \\ \dif X_d(t)\end{bmatrix}
&=\begin{bmatrix} -c& 1 &0& \dots &0 \\ 0 & -c  &1 &\dots &0\\ \dots& \dots &\dots& \dots &\dots \\0& 0 &0&\dots& -c \end{bmatrix}
\begin{bmatrix} X_1(t)\\ X_2(t)\\ \dots \\ X_d(t) \end{bmatrix} \dif t + \sqrt{2\sigma^2}
\begin{bmatrix} \dif B_1(t)\\ \dif B_2(t)\\ \dots \\ \dif B_d(t) \end{bmatrix} .\label{langevin}
\end{align}
The associated Ornstein-Uhlenbeck operator is
\begin{equation}\label{ou.op}
A_d = (-cx_1+ x_2)\frac{\partial}{\partial x_1}
+(-cx_2+x_3)\frac{\partial}{\partial x_3}+\cdots -cx_d\frac{\partial}{\partial x_d} +\sigma^2
(\frac{\partial^2}{\partial x_1^2}+\frac{\partial^2}{\partial x_2^2}+\cdots+\frac{\partial^2}{\partial x_d^2})\, .
\end{equation}
The associated Markov semigroup $(T(t))_{t\geqslant 0}$ on the Banach space of the bounded measurable functions is
\begin{equation}
 ( T(t)f) (x)=\frac{1}{(4\pi)^{3/2}(det\, \tensor{Q}_t)^{1/2}}\int_{\Rnum^3} e^{-\innp{\tensor{Q}_t^{-1} y, \, y}/4} f(e^{t\tensor{B}}x-y)\,\dif y,
\end{equation}
where $\tensor{B}=-c \mathrm{Id}+\tensor{R}$ with $\mathrm{Id}$ the identity and $\tensor{R}$ the nilpotent, and
\begin{equation*}
  \tensor{Q}_t=\sigma^2 \int_{0}^t e^{s\tensor{B}}e^{s\tensor{B}^*} \,\dif s
\end{equation*}
and $\tensor{B}^*$ denotes the transpose matrix of $\tensor{B} $.

It is well known that $(T(t))_{t\geqslant 0}$ extends to a strongly continuous semigroup of positive contractions in the Hilbert space $L^2_{\mu}=L^2(\Rnum^d,\,\dif \mu)$, where $\mu$ is the unique invariant measure \cite{cb,lun}.
Still denote by $(A_d,\,D)$ the generator of $(T(t))_{t\geqslant 0}$ in $L^2_{\mu}$, and it was shown \cite{Mg} that the spectrum consists of eigenvalues of finite multiplicities, $\sigma(A_d)=\set{-n c:\,n\in \Nnum}$ and all the generalised eigenfunctions are polynomials and form a complete system in $L^2_{\mu}$.
Let $\gamma=-nc$. It follows from \cite[Theorem 4.1]{Mg} that the algebraic multiplicity of $\gamma$ is
\begin{equation}\label{daishu}
   k_{_{A_d}}(\gamma)={n+d-1 \choose d-1},
\end{equation}
and it follows from \cite[Proposition 4.3, Theorem 4.1]{Mg} that $\nu_{_A}(\gamma)$, the index of the eigenvalue $\gamma$, is
\begin{equation}
  \nu_{_{A_d}}(\gamma)=1+(d-1)n.
\end{equation}

A natural question is what the geometric multiplicity of the eigenvalue $\gamma$ is. In addition, since the spectral subspace associated to $\gamma$ (i.e. $\mathrm{Ker}( \gamma -A_d)^{\nu_{_{A_d}}(\gamma)}$) is a finite-dimensional vector space over the real field $\Rnum$, what is the Jordan decomposition (or the Jordan canonical form) \cite{LA,mpj} of $A_d$ restricted on the spectral subspace?  That is to say, what are the integers $r>0,\, 0<q_r\leqslant q_{r-1}\leqslant \dots \leqslant q_1\leqslant q_0\leqslant \nu_{_A}(\gamma)$ and the generalised eigenfunctions $f_r, f_{r-1},\dots,f_1, f_0$, such that
\begin{equation}
  \set{f_k,( \gamma -A_d)f_k,\dots, ( \gamma -A_d)^{q_k-1}f_k:\,k=0,1,\dots,r }
\end{equation}
form the basis of the spectral subspace associated to $\gamma$, and
\begin{equation}\label{eig2}
   ( \gamma -A_d)^{q_k}f_k=0,\,k=0,1,\dots,r.
\end{equation}
The integers $(q_r,\,q_{r-1},\,\dots,q_1,\, q_0)$ are also called Segre characteristic (or say, Segre type, Segre notation). $f_k$ is called a lead vector (or say, a cyclic vector, a generator) of a Jordan chain $\set{f_k,( \gamma -A_d)f_k,\dots, ( \gamma -A_d)^{q_k-1} f_k }$ by some authors\cite{hk,mpj}.

In the present paper, we present an approach to calculate the Jordan decomposition and the generalised eigenfunctions (see Theorem~\ref{tm1},\ref{th1}) for $d=2,3$.\footnote{ We think that the approach is also able to solve the same problems for $d\geqslant4$.} The proof of Theorem~\ref{tm1} is by direct calculation. The main techniques of the proof of Theorem~\ref{th1} are canonical projection, the theory of systems of linear
equations. This approach is novel to the Jordan decomposition of differential operators as far as we know.

\section{In case of 2-dimension}\label{sec02}
In this section, we treat the case of $d=2$. Denote $\rho=\frac{\sigma^2}{c}$. The Hermite polynomials is defined by the formula $$H_n(x)=(-\rho)^n e^{x^2/2\rho}\frac{\dif^n}{\dif x^n}e^{-x^2/2\rho},\,n=1,2,\dots.$$
\begin{thm}\label{tm1}
The geometric multiplicity of the eigenvalue $\gamma$ is $1$. Set
\begin{equation}
    G_i(x)=\sum_{j=0}^{\lfloor i/2\rfloor}\,\frac{1}{2^j}\frac{1}{j!(i-2j)!}(-\frac{\rho}{2c^2})^jH_{i-2j}(x),\, i=0,1,\dots.
  \end{equation}
Suppose $f=G_n$, then $\set{f,\,(\gamma-A_2)f,\,(\gamma-A_2)^2 f,\dots,(\gamma-A_2)^n f}$ forms a basis of the spectral subspace associated to $\gamma$. And
\begin{equation}\label{fnk}
   (\gamma- A_2)^{k} f=(-1)^{k}\sum_{i=0\vee ( n- 2k)}^{n-k}\,(-\frac{\rho}{2c})^{n-k -i}{k \choose n-k -i}G_i(x)H_{2k- n+ i}(y),
  \end{equation}
where $a \vee b = \max\set{a,b}$ and $k=0,1,\dots,n$. Especially, $(\gamma- A_2)^{n} f=(-1)^n H_n(y)$ is the eigenfunction associated to $\gamma$.
\end{thm}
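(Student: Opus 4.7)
The plan is to prove both the explicit formula~\eqref{fnk} and the basis/multiplicity statements by induction on $k$, all resting on a single master identity for the action of $T := \gamma - A_2$ on products $G_i(x)H_m(y)$. First, I decompose $A_2 = L_x + L_y + y\partial_x$, where $L_x = \sigma^2\partial_x^2 - cx\partial_x$ is the one-dimensional OU operator. Using the classical Hermite identities $H_m'(x) = m H_{m-1}(x)$, $x H_m(x) = H_{m+1}(x) + m\rho H_{m-1}(x)$, and $L_x H_m = -mc H_m$, a quick re-indexing in the definition of $G_i$ yields $G_i'(x) = G_{i-1}(x)$ together with the key auxiliary identity
\begin{equation*}
L_x G_i = -ic\,G_i - \frac{\rho}{2c}\,G_{i-2}.
\end{equation*}
Combining these three ingredients gives the master formula
\begin{equation*}
T(G_i H_m) = (i+m-n)c\,G_i H_m + \frac{\rho}{2c}\,G_{i-2} H_m - G_{i-1} H_{m+1} - m\rho\,G_{i-1} H_{m-1}.
\end{equation*}

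Next I run the induction. Every summand $G_i H_m$ appearing in $f_k := T^k f$ satisfies $m - i = 2k - n$; call this the \emph{level}. The four pieces of the master formula split cleanly: the first and fourth preserve the level, while the second and third raise it by $2$. Assuming $f_k$ is given by~\eqref{fnk}, I must verify that (a) the level-$(2k-n)$ contributions to $T f_k$ cancel termwise, and (b) the level-$(2k-n+2)$ contributions regroup into the stated expression for $f_{k+1}$. After re-indexing the $H$-lowering terms via $i \mapsto i+1$, part (a) reduces to the trivial arithmetic identity $2i + 2k - 2n = -2(n-k-i)$ combined with the binomial ratio $\binom{k}{n-k-i-1}/\binom{k}{n-k-i} = (n-k-i)/(2k-n+i+1)$; part (b), after factoring out $(-\rho/(2c))^{n-k-i-1}$, is exactly Pascal's rule $\binom{k}{n-k-i-1} + \binom{k}{n-k-i-2} = \binom{k+1}{n-k-i-1}$. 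A bookkeeping check confirms that the summation range $0 \vee (n-2k) \leq i \leq n-k$ shifts correctly to $0 \vee (n-2(k+1)) \leq i \leq n-(k+1)$ under $T$.

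To finish, at $k = n$ the formula~\eqref{fnk} collapses to the single term $i = 0$, giving $f_n = (-1)^n H_n(y)$; since $H_n(y)$ depends only on $y$, we have $A_2 H_n(y) = L_y H_n(y) = -nc H_n(y) = \gamma H_n(y)$, so $T f_n = 0$ and $f_n$ is a genuine eigenfunction. Because the algebraic multiplicity $k_{A_2}(\gamma) = n+1$ and the index $\nu_{A_2}(\gamma) = n+1$ coincide (by the results quoted from~\cite{Mg}), the spectral subspace must consist of a single Jordan block of size $n+1$, so the geometric multiplicity is $1$. Since $T^n f \neq 0$ and $T^{n+1} f = 0$, the chain $\{f, Tf, \dots, T^n f\}$ has the maximal length $n+1$ and is therefore a basis of the spectral subspace. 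The main obstacle is the bookkeeping in the inductive step: four summand types must be reshuffled into exactly the shape of~\eqref{fnk} with correct signs, indexing ranges, and binomial coefficients; the underlying identities are elementary, but mismatching any one binomial index would break the chain.
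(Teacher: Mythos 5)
Your proposal is correct and takes essentially the same route as the paper: your identities $G_i'=G_{i-1}$ and $L_xG_i=-icG_i-\frac{\rho}{2c}G_{i-2}$ combine into exactly the paper's recursion \eqref{a23}, your master formula for $(\gamma-A_2)(G_iH_m)$ is the displayed computation in the paper's proof, and your induction (cancellation of the level-preserving terms plus Pascal's rule for the level-raising ones) is the paper's inductive step written out term by term. The paper reduces the theorem to \eqref{fnk} and leaves the basis/geometric-multiplicity conclusion implicit, which you supply via the chain length and the multiplicity count from \cite{Mg}; that is a harmless addition, not a different method.
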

\begin{proof} we need only prove Eq.(\ref{fnk}).
  It is easy to check that $G_i(x)$ satisfies the recursion relation:
  \begin{equation}\label{a23}
   \left\{
      \begin{array}{ll}
      (- i c-A_2 )G_i(x)=-y C_{i-1}(x)+ \frac{\rho}{2c} G_{i-2}(x),  \\
      G_0(x)=1,\, G_1(x)=x.
      \end{array}
\right.
      \end{equation}
Clearly, for any differentiable function $h(x),g(y)$, we have
   \begin{align*}
     (\gamma -A_2 )h(x)g(y)=g(y )(\gamma -A_2 )h(x)+c\cdot h(x)(-\rho\frac{\partial^2}{\partial y^2}+ y\frac{\partial}{\partial y})g(y ).
   \end{align*}
Then by the property of the Hermite polynomials \cite{guo}, we have
   \begin{align*}
     (\gamma -A_2 )G_i(x)H_{2k- n+ i}(y)
     &=H_{2k- n+ i}(y)[-y G_{i-1}(x)+ \frac{\rho}{2c}G_{i-2}(x)+(i -n)c G_i(x)]\\
     &+ cG_i(x)( 2k -n+ i)H_{2k- n+ i}(y)\quad \text{(by (\ref{a23})}\\
     &= H_{2k- n+ i}(y)[-y G_{i-1}(x)+ \frac{\rho}{2c} G_{i-2}(x)+2(i+k-n)c G_i(x) ]\\
     &=-G_{i-1}(x)[H_{2k -n+ i+1}(y)+(2k-n+ i)\rho H_{2k- n+ i-1}(y) ]\\
     & + H_{2k -n+ i}(y)[\frac{\rho}{2c} G_{i-2}(x)+2(i +k-n)c G_i(x) ].
\end{align*}
When $n\geqslant 2k+2 $, by the mathematical induction, we have that
   \begin{align*}
      (-1)^{k+1}(\gamma- A_2)^{k+1}f&=-\sum _{i=0}^{n-k}\,(-\frac{\rho}{2c})^{n-k -i}{k \choose n-k -i}(\gamma- A_2) G_i(x)H_{2k-n+ i}(y)\\
      &= \sum_{i=1}^{n-k}\, (-\frac{\rho}{2c})^{n-k -i}{k \choose n -k -i }G_{i-1}(x)[H_{2k-n+ i+1}(y)+(2k-n+ i)\rho H_{2k-n+ i-1}(y) ]\\
      &+ \sum_{i=2}^{n-k}\, (-\frac{\rho}{2c})^{n-k+1 -i}{k \choose n - k -i }G_{i-2}(x)H_{2k-n+ i}(y)\\
      &-\rho \sum_{i=0}^{n-k-1}\, (-\frac{\rho}{2c})^{n-k -1-i}(n -i -k){k \choose n -k -i }G_{i}(x)H_{2k-n+ i}(y)\\
     &= \sum_{i=0}^{n-k-1}\, (-\frac{\rho}{2c})^{n-k -i-1}{k+1 \choose n-k -i-1}G_i(x)H_{2(k+1)- n+ i}(y).
   \end{align*}
 When $n< 2k+2 $, the proof is similar.
\end{proof}

\section {In case of 3-dimension}\label{sec2}
In this section, we treat the case of $d=3$. For convenience, we give some notations firstly.
Set $\mathcal{P}$ the space of all polynomials with variables (x,y,z), $\mathcal{P}_n$ the space of polynomials of degree less than or equal to $n$ and $\mathcal{H}_n$ the space of homogeneous polynomials of degree $n$. Then $\mathcal{P}=\cup_n \mathcal{P}_n$ and one has a usual direct sum decomposition of all polynomials\cite{Mg},
\begin{equation}
  \mathcal{P}_n=\oplus_{m=0}^n \mathcal{H}_m.
\end{equation}

\begin{nott}
 Set  $\gamma=-nc$, $r=\lfloor \frac{n}{2}\rfloor$ and $\rho=\frac{\sigma^2}{c}$.
\end{nott}

By the monomials property of the Hermite polynomials \cite{guo}
$$ x^n=\sum^{\lfloor n/2\rfloor}_{k=0}{n \choose 2k}(2k-1)!! \rho^k H_{n-2k}(x),$$
 the Hermite polynomials are another basis of $\mathcal{P}$.

 Let $\mathcal{H}_m'=\mathrm{span}\set{H_i(x)H_j(y)H_k(z),\,i+j+k=m}$, then we have another direct sum decomposition of all polynomials, i.e.,
 \begin{equation}
 \mathcal{P}=\cup_n \mathcal{P}_n,\quad  \mathcal{P}_n=\oplus_{m=0}^n \mathcal{H}_m'.
\end{equation}
We denote by $\mathcal{Q}_m$ the canonical projection \cite{Rs} of $\mathcal{P}$ onto $\mathcal{H}_m'$ .

\begin{thm}\label{th1} Let
  \begin{equation}
    q_k=2n+1-4k,\quad k=0,1,2,\dots,r.
  \end{equation}
Then there exist $\set{ f_k,\,k=0,\dots,r}$ so that $\set{f_k,( \gamma -A_3)f_k,\dots, ( \gamma -A_3)^{q_k-1}f_k:\,k=0,\dots,r }$ forms a basis of the spectral subspace associated to $\gamma$. Set $h_k=( \gamma -A_3)^{q_{_{k}}-1} f_{k}$. Then $\set{h_k,\,k=0,1,2,\dots,r}$ are  the basis of the eigenspace of the eigenvalue $\gamma$  and satisfy
\begin{equation}\label{eqq}
   \mathcal{Q}_n h_{k} =\sum\limits_{i=0}^{k}\,(-2)^{k-i} {k \choose i}H_{k-i}(x)H_{2i}(y)H_{n-k-i}(z).
\end{equation}
\end{thm}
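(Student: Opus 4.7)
The plan is to reduce the Jordan decomposition of $(\gamma-A_3)$ on the generalised eigenspace $V^{(n)}$ of $\gamma$ to a linear-algebra problem on the top Hermite piece $\mathcal{H}_n'$ via the canonical projection $\mathcal{Q}_n$, and then to solve that finite-dimensional problem using the natural $\mathfrak{sl}_2$-structure of the nilpotent perturbation. Write $A_3=L+N$ with $L=\sigma^2\Delta-c(x\partial_x+y\partial_y+z\partial_z)$ the symmetric OU operator and $N=y\partial_x+z\partial_y$ the perturbation. In the Hermite basis $L$ is diagonal, $L[H_iH_jH_k]=-(i+j+k)c\,H_iH_jH_k$, and using the three--term recursion $yH_j(y)=H_{j+1}(y)+j\rho H_{j-1}(y)$ I decompose $N=N_0+N_{-2}$, where
\[ N_0[H_iH_jH_k]=iH_{i-1}H_{j+1}H_k+jH_iH_{j-1}H_{k+1} \]
preserves Hermite degree and $N_{-2}$ lowers it by $2$.

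The first step is to show that $\mathcal{Q}_n$ restricts to a linear isomorphism from $V^{(n)}$ onto $\mathcal{H}_n'$ which intertwines $(\gamma-A_3)$ with $-N_0$. For $p=\sum_{m\le n}p_m\in\mathcal{P}_n$, the $\mathcal{H}_m'$-component of $(\gamma-A_3)p$ is $((n-m)c-N_0)p_m-N_{-2}p_{m+2}$; since $(n-m)c-N_0$ is a nonzero scalar plus a nilpotent (hence invertible) for every $m<n$, a triangular inversion recursively reconstructs $p_{n-2},p_{n-4},\dots$ from any prescribed $p_n\in\mathcal{H}_n'$, so the resulting $p$ is a generalised eigenfunction and the map $\mathcal{Q}_n\colon V^{(n)}\to\mathcal{H}_n'$ is bijective. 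Hence the Jordan type of $(\gamma-A_3)$ on $V^{(n)}$ coincides with that of $N_0$ on $\mathcal{H}_n'$.

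The $\Rnum$-linear dictionary $H_i(x)H_j(y)H_k(z)\leftrightarrow X^iY^jZ^k$ identifies $(\mathcal{H}_n',N_0)$ with $(\Rnum[X,Y,Z]_n,F)$, where $F:=Y\partial_X+Z\partial_Y$. Since $F$ generates the Jordan flow $\dot X=Y$, $\dot Y=Z$, $\dot Z=0$, its kernel on degree-$n$ polynomials is spanned by products of the two independent polynomial invariants $Z$ and $Y^2-2XZ$, giving
\[ \ker F|_{\Rnum[X,Y,Z]_n}=\mathrm{span}\set{Z^{n-2k}(Y^2-2XZ)^k:0\le k\le r}; \]
expanding by the binomial theorem and translating back through the dictionary reproduces formula (\ref{eqq}) for $\mathcal{Q}_n h_k$ and shows that the geometric multiplicity is $r+1$. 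For the chain lengths I introduce the companion operators $E:=X\partial_Y+Y\partial_Z$ and $H:=X\partial_X-Z\partial_Z$: short bracket computations give $[E,F]=H$, $[H,E]=E$, $[H,F]=-F$, so $\set{E,F,H}$ is (a rescaling of) an $\mathfrak{sl}_2$-triple acting on $\Rnum[X,Y,Z]$. The $H$-weights of $Z^{n-2k}(Y^2-2XZ)^k$ and $X^{n-2k}(Y^2-2XZ)^k$ are $2k-n$ and $n-2k$ respectively, so the irreducible $\mathfrak{sl}_2$-submodule they generate has dimension $q_k=(n-2k)-(2k-n)+1=2n+1-4k$, and the unique Jordan string of $F$ inside that submodule joins the highest-weight to the lowest-weight vector in $q_k$ steps. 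Taking $f_k$ to be the unique element of $V^{(n)}$ whose top part corresponds via the dictionary to $X^{n-2k}(Y^2-2XZ)^k$, the family $\set{(\gamma-A_3)^jf_k:0\le j\le q_k-1,\,0\le k\le r}$ is the desired basis (with $\sum_k q_k=\binom{n+2}{2}$ matching the algebraic multiplicity), and $h_k=(\gamma-A_3)^{q_k-1}f_k$ satisfies (\ref{eqq}).

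The main obstacle I anticipate is the careful bookkeeping in the first reduction: one must verify rigorously that the degree-lowering tail $N_{-2}$ never interferes, so that the Jordan type really is dictated by the top-degree piece $N_0$. A secondary challenge, if one wishes to avoid invoking $\mathfrak{sl}_2$ representation theory — as the paper's introduction suggests, preferring ``the theory of systems of linear equations'' — is to extract the Segre characteristic by directly computing $\dim\ker F^j$ for each $j$ on homogeneous polynomials in $X,Y,Z$ and reading off the block sizes combinatorially.
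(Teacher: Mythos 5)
Your proposal is correct in substance, but it proves Theorem~\ref{th1} by a genuinely different route from the paper. The paper never isolates your key structural statement that $\mathcal{Q}_n$ restricts to an isomorphism of the spectral subspace onto $\mathcal{H}_n'$ intertwining $\gamma-A_3$ with the degree-preserving part $-N_0$; instead it works with the projected equations piecemeal: it solves $\mathcal{Q}_n(\gamma-A_3)\psi=0$ as an explicit bidiagonal homogeneous system (Proposition~\ref{ll3}), which yields the eigenfunctions $h_k$ and formula \eqref{eqq}, completes them to genuine eigenfunctions via Lemma~\ref{pp1} and Proposition~\ref{pp2}, and then obtains the lead vectors $f_k$ by showing that the matrix $\tensor{S}_{r-k}$ representing $\mathcal{Q}_n(\gamma-A_3)^{q_k-1}$ between the $2k$-th and $2(n-k)$-th heights of the Hasse diagram is nonsingular --- the technical heart being the Binet--Cauchy/total-nonnegativity induction of Proposition~\ref{pp20} --- before closing with the count $\sum_k q_k=\binom{n+2}{2}$ and the independence of Jordan chains. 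You replace all of that matrix work by $\mathfrak{sl}_2$ theory: your dictionary identifies $N_0$ with $F=Y\partial_X+Z\partial_Y$ acting on degree-$n$ polynomials, i.e.\ on the $n$-th symmetric power of the three-dimensional irreducible module, whose decomposition into irreducible summands of dimensions $2(n-2k)+1=q_k$ (each with multiplicity one) delivers the Segre characteristic, the kernel basis $Z^{n-2k}(Y^2-2XZ)^k$ --- whose binomial expansion is exactly \eqref{eqq} --- and highest-weight lead vectors $X^{n-2k}(Y^2-2XZ)^k$, which in passing confirms, up to normalization, the conjecture on $\mathcal{Q}_n f_k$ stated in the remark after Proposition~\ref{pp20}. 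The paper's route buys self-containedness (only linear systems of equations) at the cost of the heavy nonsingularity proof; yours buys conceptual transparency and a clearer path to $d\geqslant 4$, at the cost of invoking classical facts (complete reducibility, the plethysm of the symmetric powers, or equivalently that the unipotent invariant ring is generated by $Z$ and $Y^2-2XZ$) that should be cited or proved.

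Two steps need tightening, though neither is a fundamental gap. First, the recursion you sketch for the reduction, $p_m=((n-m)c-N_0)^{-1}N_{-2}\,p_{m+2}$, produces a polynomial $p$ with $(\gamma-A_3)p=-N_0p_n\in\mathcal{H}_n'$, which is not yet visibly a generalised eigenfunction; the clean argument is injectivity of $\mathcal{Q}_n$ on the spectral subspace (because $\gamma-A_3$ is invertible on $\oplus_{m<n}\mathcal{H}_m'$, by the triangular structure of Lemma~\ref{pp1}) combined with the known algebraic multiplicity $\binom{n+2}{2}=\dim\mathcal{H}_n'$, which forces bijectivity. Second, $F^{q_k-1}$ carries $X^{n-2k}(Y^2-2XZ)^k$ onto a nonzero scalar multiple of $Z^{n-2k}(Y^2-2XZ)^k$, not onto it exactly, so $f_k$ must be rescaled for \eqref{eqq} to hold on the nose; likewise the claim that this image lies in the span of $Z^{n-2k}(Y^2-2XZ)^k$ uses the multiplicity-one property of the summands, which you should state explicitly. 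Both fixes are routine.
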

Proof of Theorem~\ref{th1} is presented in Subsection~\ref{ssec}. The following is a by-pass product.
\begin{cor}\label{corr1}
 The geometric multiplicity of the eigenvalue $\gamma$ of the Ornstein-Uhlenbeck operator $A_3$ is $r+1$.
\end{cor}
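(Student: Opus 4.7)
The plan is to derive Corollary~\ref{corr1} directly from Theorem~\ref{th1} together with standard linear algebra, without any further calculation involving $A_3$ or Hermite polynomials. First I would observe that Theorem~\ref{th1} already exhibits $r+1$ candidate eigenvectors $h_0,h_1,\dots,h_r$ in $\ker(\gamma-A_3)$, since $h_k=(\gamma-A_3)^{q_k-1}f_k$ and $(\gamma-A_3)h_k=(\gamma-A_3)^{q_k}f_k=0$ by \eqref{eig2}. These vectors are linearly independent because they sit inside the basis of the spectral subspace furnished by the theorem, so the geometric multiplicity is at least $r+1$.

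For the reverse inequality, I would invoke the standard correspondence (see e.g.\ \cite{LA,mpj}) between the geometric multiplicity of an eigenvalue and the number of Jordan chains in any Jordan decomposition on its spectral subspace. To certify that Theorem~\ref{th1} indeed provides a \emph{complete} Jordan decomposition of $\gamma$ (rather than only a partial one), I would perform a dimension check. The total number of basis vectors produced by the theorem is
\begin{equation*}
  \sum_{k=0}^{r} q_k \;=\; \sum_{k=0}^{r}(2n+1-4k) \;=\; (r+1)(2n+1-2r).
\end{equation*}
A short case distinction on the parity of $n$ (so that $n=2r$ or $n=2r+1$) shows that this expression equals $\binom{n+2}{2}$, which matches the algebraic multiplicity $k_{A_3}(\gamma)$ recorded in \eqref{daishu}. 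Consequently the $r+1$ Jordan chains exhaust the full spectral subspace $\ker(\gamma-A_3)^{\nu_{A_3}(\gamma)}$, and there is no room for any eigenvector outside the span of $\{h_0,\dots,h_r\}$. This forces $\dim\ker(\gamma-A_3)=r+1$.

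The only real difficulty in the whole argument is Theorem~\ref{th1} itself, whose proof (via canonical projection onto $\mathcal{H}'_m$ followed by the theory of linear systems) is deferred to Subsection~\ref{ssec}. Once the structural claim of Theorem~\ref{th1} is granted, the corollary is essentially a tautology from Jordan canonical form theory, modulo the elementary binomial identity verifying the dimension match above.
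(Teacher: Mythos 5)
Your proposal is correct and follows essentially the same route as the paper: the paper proves the corollary inside the proof of Theorem~\ref{th1}, using the linear independence of the chain vectors together with the same dimension identity $\sum_{k=0}^{r}q_k=\binom{n+2}{2}$ (Eq.~(\ref{decom1})) to conclude the chains exhaust the spectral subspace, and combining this with the lower bound of $r+1$ independent eigenfunctions $h_k$. The only cosmetic difference is that the paper obtains the lower bound from its explicit construction in Corollary~\ref{pp7} rather than by reading the $h_k$ off the statement of Theorem~\ref{th1}, as you do.
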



\subsection{Proof of the theorem}\label{ssec}
  Note that $$A_3=-c(-\rho \frac{\partial^2 }{\partial x^2}+ x\frac{\partial}{\partial x})
-c(-\rho\frac{\partial^2}{\partial y^2}+ y\frac{\partial}{\partial y})-c(-\rho\frac{\partial^2}{\partial z^2}+ z\frac{\partial}{\partial z}) +y\frac{\partial}{\partial x}+z\frac{\partial}{\partial y}.$$
It follows from the property of the Hermite polynomials \cite{guo} that
 \begin{align}
   (\gamma-A_3 )(H_i(x)H_j(y)H_k(z))&= (m-n)c H_i(x)H_j(y)H_k(z)-iH_{i-1}(x)[H_{j+1}(y)+j\rho H_{j-1}(y)]H_k(z)\nonumber \\
   & - jH_{i}(x)H_{j-1}(y)[H_{k+1}(z)+k\rho H_{k-1}(z) ].\label{guu}
 \end{align}
For convenience, Eq.(\ref{guu}) can be rewritten as the following style.
\begin{prop}
If $\varphi \in \mathcal{H}_m'$ then $ (\gamma-A_3 )\varphi =\mathcal{Q}_m(\gamma-A_3 )\varphi + \mathcal{Q}_{m-2}(\gamma-A_3 )\varphi$. Especially,
 \begin{eqnarray}\label{eq0}
  (\gamma-A_3 )(H_i(x)H_j(y)H_k(z))& =& \mathcal{Q}_m(\gamma-A_3 )(H_i(x)H_j(y)H_k(z))\nonumber \\
  & +& \mathcal{Q}_{m-2}(\gamma-A_3 )(H_i(x)H_j(y)H_k(z))
 \end{eqnarray}
 where $m=i+j+k$ and
  \begin{eqnarray}\label{eq1}
     \mathcal{Q}_m(\gamma-A_3 )(H_i(x)H_j(y)H_k(z))&=&(m-n)c H_i(x)H_j(y)H_k(z) -iH_{i-1}(x)H_{j+1}(y)H_k(z)\nonumber \\
     &-& jH_{i}(x)H_{j-1}(y)H_{k+1}(z),
  \end{eqnarray}
  and
  \begin{equation}
     \mathcal{Q}_{m-2}(\gamma-A_3 )(H_i(x)H_j(y)H_k(z))=-ij\rho H_{i-1}(x)H_{j-1}(y)H_k(z)-jk\rho H_{i}(x)H_{j-1}(y)H_{k-1}(z).
  \end{equation}
  Especially, if $m=n$, then
  \begin{equation}\label{eq2}
     \mathcal{Q}_n(\gamma-A_3 )(H_i(x)H_j(y)H_k(z))=-iH_{i-1}(x)H_{j+1}(y)H_k(z)-jH_{i}(x)H_{j-1}(y)H_{k+1}(z).
  \end{equation}
\end{prop}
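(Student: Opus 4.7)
The strategy is to read the claim directly off the already-established identity~(\ref{guu}) by sorting its right-hand side according to total Hermite degree. Both $(\gamma-A_3)$ and each canonical projection $\mathcal{Q}_l$ are linear, and the monomials $\set{H_i(x)H_j(y)H_k(z):\,i+j+k=m}$ span $\mathcal{H}_m'$, so it suffices to verify the decomposition on a single basis element $\varphi = H_i(x)H_j(y)H_k(z)$ with $i+j+k=m$.

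Applying~(\ref{guu}) to such a $\varphi$ produces five terms. Three of them have total Hermite degree exactly $m$: the diagonal contribution $(m-n)c\,H_i(x)H_j(y)H_k(z)$, and the two shift contributions $-iH_{i-1}(x)H_{j+1}(y)H_k(z)$ and $-jH_i(x)H_{j-1}(y)H_{k+1}(z)$. The remaining two have total Hermite degree $m-2$: namely $-ij\rho H_{i-1}(x)H_{j-1}(y)H_k(z)$ and $-jk\rho H_i(x)H_{j-1}(y)H_{k-1}(z)$. Hence $(\gamma-A_3)\varphi$ lies in $\mathcal{H}_m' \oplus \mathcal{H}_{m-2}'$, and by uniqueness of the direct sum decomposition $\mathcal{P}_n = \oplus_{l=0}^n \mathcal{H}_l'$ all other projections $\mathcal{Q}_l(\gamma-A_3)\varphi$ vanish. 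Reading off the $\mathcal{H}_m'$-part and the $\mathcal{H}_{m-2}'$-part yields the formula~(\ref{eq1}) and the displayed expression for $\mathcal{Q}_{m-2}(\gamma-A_3)\varphi$, respectively; the special case~(\ref{eq2}) for $m=n$ is then immediate, since the diagonal coefficient $(m-n)c$ vanishes.

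I do not expect any real obstacle: the proposition is, in essence, a bookkeeping reformulation of~(\ref{guu}) in the language of the graded decomposition $\mathcal{P} = \cup_n \mathcal{P}_n$, $\mathcal{P}_n=\oplus_{l=0}^n \mathcal{H}_l'$. The only point requiring minor attention is at the index boundaries, where an expression such as $H_{-1}$ might formally appear; but in every such situation the offending term carries the matching index $i$, $j$ or $k$ as a coefficient, which is then zero, so no ill-defined object actually occurs. Once the five terms of~(\ref{guu}) are grouped by total Hermite degree, the rest of the argument is automatic.
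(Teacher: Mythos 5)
Your proposal is correct and matches the paper's treatment: the paper presents this proposition precisely as a rewriting of Eq.~(\ref{guu}), obtained by grouping the five terms according to total Hermite degree ($m$ versus $m-2$) and extending by linearity, which is exactly what you do, including the harmless boundary cases where a term like $H_{-1}$ carries a vanishing coefficient.
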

\begin{rem}\label{ideea}
  Eq.(\ref{eq1})---(\ref{eq2}) make us use the terminology of graph theory. In fact, by Eq.(\ref{eq2}), we have a weighted and directed acyclic graph(which also can be seen as a Hasse diagram) of the evolution of basis of $\mathcal{H}_n'$ operated by $ \mathcal{Q}_n(\gamma-A_3 )$. For example, when $n=3$ and $n=4$,\footnote{  The directed acyclic graph can be easily given for any integer $n$, and we ignore it for convenience of typing.} the directed acyclic graphs are respectively
\begin{center}
\begin{tikzpicture}[scale=1, transform shape]
\tikzstyle{every node} = [circle, fill=gray!30]
\node (a) at (0, 1) {300};
\node (b) at +(1.5, 1) {210};
\node (c) at +(3, 0) {120};
\node (d) at +(3, 1) {201};
\node (e) at +(4.5,0) {030};
\node (f) at +(4.5,1) {111};
\node (g) at +(6,0) {021};
\node (h) at +(6,1) {102};
\node (i) at +(7.5,1) {012};
\node (j) at +(9, 1) {003};
\foreach \from/\to in {a/b, b/c, b/d, c/e, c/f, d/f, e/g,f/g,f/h,g/i,h/i,i/j}
\draw [->] (\from) -- (\to);
\end{tikzpicture}
\end{center}
\begin{center}
\begin{tikzpicture}[scale=1, transform shape]
\tikzstyle{every node} = [circle, fill=gray!30]
\node (a) at (0, 1) {400};
\node (b) at +(1.5, 1) {310};
\node (c) at +(3, 0) {220};
\node (d) at +(3, 1) {301};
\node (e) at +(4.5, 0) {130};
\node (f) at +(4.5,1) {211};
\node (g) at +(6,-1) {040};
\node (h) at +(6,0) {121};
\node (h0) at +(6,1) {202};
\node (i) at +(7.5,0) {031};
\node (i0) at +(7.5,1) {112};
\node (j) at +(9, 0) {022};
\node (j0) at +(9,1) {103};
\node (k) at +(10.5,1) {013};
\node (m) at +(12,1) {004};
\foreach \from/\to in {a/b, b/c, b/d, c/e, c/f, d/f, e/g,e/h, f/h, f/h0, g/i, h/i, h/i0, h0/i0, i/j, i0/j, i0/j0, j/k, j0/k, k/m}
\draw [->] (\from) -- (\to);
\end{tikzpicture}
\end{center}
where we denote by the triple integers $(i,j,k) $ the Hermite polynomial $H_i(x)H_j(y)H_k(z)$. It follows from Eq.(\ref{eq2}) that the weights of the arrows between $(i,j,k)$ and $(i,,j-1,k+1),\,(i-1,j+1,k)$ are $-j,\,-i$ respectively.

  One can find out many properties from the directed acyclic graph. For example, for the vertex $(i,j,k)$ with $i+j+k=n$, the height (which is defined by the distance between vertices $(n,0,0)$ and $(i,j,k)$, and thus is between $0\sim 2n$) is $h=j+2k$. For simplicity, in each height of the graph, we list the vertices $(i,j,k)$ decreasingly by lexicographic order. Then the vertices $(i,j,k)$ and $(k,j,i)$ are symmetric about the $n$-th height of the graph.

It follows from Eq.(\ref{daishu}), Corollary~\ref{corr1} and Theorem~\ref{th1} that the order of the graph (the numbers of vertices in the graph) is the algebraic multiplicity of $\gamma$, $q_k$ are $1$ plus the distance between vertices $(\lceil\frac{n}{2}\rceil +k,0,\lceil\frac{n}{2}\rceil-k)$ and $(\lceil\frac{n}{2}\rceil-k,0,\lceil\frac{n}{2}\rceil+k)$, and the numbers of vertices in the $n$-th height of the graph is the geometric multiplicity of $\gamma$.
\end{rem}


\begin{prop}\label{pp33}
The spectral subspace associated to $\gamma=-nc$ belongs to $\oplus_{i=0}^{\lfloor \frac{n}{2}\rfloor} \mathcal{H}_{n-2i}'$.
\end{prop}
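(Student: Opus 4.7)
The plan is to exploit the graded decomposition $\mathcal{P}=\cup_n\mathcal{P}_n=\cup_n\oplus_m\mathcal{H}_m'$ together with the block lower triangular structure of $\gamma-A_3$ established in the preceding proposition. Since every generalised eigenfunction is a polynomial, any $p$ in the spectral subspace can be written as a finite sum $p=\sum_m p_m$ with $p_m\in\mathcal{H}_m'$. The inclusion $(\gamma-A_3)\mathcal{H}_m'\subset\mathcal{H}_m'\oplus\mathcal{H}_{m-2}'$ says that, in the grading by $m$, the operator $\gamma-A_3$ is block lower triangular, with diagonal block equal to $\mathcal{Q}_m(\gamma-A_3)$ on $\mathcal{H}_m'$.

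The first step is to show that $\mathcal{Q}_m(\gamma-A_3)$ is invertible on $\mathcal{H}_m'$ whenever $m\neq n$. By Eq.(\ref{eq1}) this operator equals $(m-n)c\cdot\mathrm{Id}+N_m$, where
\[
N_m\bigl(H_i(x)H_j(y)H_k(z)\bigr)=-iH_{i-1}(x)H_{j+1}(y)H_k(z)-jH_i(x)H_{j-1}(y)H_{k+1}(z).
\]
Each application of $N_m$ strictly increases the height $j+2k$ (bounded on $\mathcal{H}_m'$ by $2m$), or equivalently the Hasse diagram in Remark~\ref{ideea} is acyclic, so $N_m$ is nilpotent. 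A scalar-plus-nilpotent operator is invertible precisely when the scalar is nonzero, whence the claim.

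Next comes the leading-term argument. Fix a nonzero $p$ in the spectral subspace and set $M=\max\{m:p_m\neq 0\}$. Since $p_{m'}=0$ for $m'>M$, an easy induction shows that the $\mathcal{H}_M'$-component of $(\gamma-A_3)^kp$ is exactly $[\mathcal{Q}_M(\gamma-A_3)]^kp_M$. If $M\neq n$, the first step forces this to remain nonzero for every $k\geq 0$, contradicting $(\gamma-A_3)^{\nu_{A_3}(\gamma)}p=0$; hence $M=n$. Because $\gamma-A_3$ changes $m$ by $0$ or $2$, the parity of $m-n$ is preserved, so the splitting $p=p^{\mathrm{even}}+p^{\mathrm{odd}}$ according to this parity is itself in the spectral subspace componentwise; applying the previous conclusion to $p^{\mathrm{odd}}$, whose leading index is at most $n-1$, forces $p^{\mathrm{odd}}=0$. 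Therefore the surviving components of $p$ sit at indices $m=n,n-2,n-4,\dots$, i.e., $p\in\bigoplus_{i=0}^{\lfloor n/2\rfloor}\mathcal{H}_{n-2i}'$.

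The only point requiring any real argument is the nilpotence of $N_m$, and this is immediate from the finiteness of the height function $j+2k$ on $\mathcal{H}_m'$; the remaining steps are routine linear algebra on a finite-dimensional graded space.
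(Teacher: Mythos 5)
Your proof is correct and follows essentially the same route as the paper's: it uses the block lower-triangular structure of $\gamma-A_3$ with respect to the grading by $\mathcal{H}_m'$, the fact that the diagonal block $\mathcal{Q}_m(\gamma-A_3)=(m-n)c\,\mathrm{Id}+N_m$ is invertible for $m\neq n$, and a leading-component plus parity argument to kill the levels $m>n$ and $m\not\equiv n \pmod 2$. The only difference is that you spell out the nilpotence of $N_m$ and the even/odd splitting explicitly, which the paper's terse proof leaves implicit.
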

\begin{proof}
Suppose that $f$ is a generalised eigenfunction, i.e., there exists a integer $k \geqslant 1$ such that $(\lambda-A_3)^k f=0$. It follows from Eq.(\ref{eq1}) that if the degree of $f$ is $m\neq n$ then $\mathcal{Q}_m(\lambda-A_3)^k f\neq 0$. This is a contradiction, then $f\in \mathcal{P}_n$ and the degree is exactly $n$.\footnote{The reader can also refer to  \cite[Proposition 3.1]{Mg}.}
If there is an $i=0,1,\dots,\lfloor\frac{n-1}{2}\rfloor$ such that $\mathcal{Q}_{n-1-2i}f\neq 0$, then by Eq.(\ref{eq0}), $\mathcal{Q}_{n-1-2i}(\lambda-A_3)^k f\neq 0$. This is a contradiction, then $f\in \oplus_{i=0}^{\lfloor \frac{n}{2}\rfloor} \mathcal{H}_{n-2i}'$.
\end{proof}
\begin{lem}\label{pp1}
 For any polynomial $ g\in \mathcal{H}_{m}'$ with $m\neq n$, there exists a unique solution $f\in \mathcal{H}_m'$ to the equation $\mathcal{Q}_m(\gamma-A_3)f=g$.
\end{lem}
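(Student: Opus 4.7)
The plan is to realise $\mathcal{Q}_m(\gamma-A_3)$ as a linear endomorphism of the finite-dimensional space $\mathcal{H}_m'$ and show it is invertible for $m\neq n$. Take the basis $\{H_i(x)H_j(y)H_k(z):i+j+k=m\}$ of $\mathcal{H}_m'$. By Eq.(\ref{eq1}) the map sends this basis into $\mathcal{H}_m'$, so I may write
\[
\mathcal{Q}_m(\gamma-A_3)\big|_{\mathcal{H}_m'}=(m-n)c\cdot I+N,
\]
where $N$ is the ``off-diagonal'' part defined on basis vectors by
\[
N\bigl(H_i(x)H_j(y)H_k(z)\bigr)=-iH_{i-1}(x)H_{j+1}(y)H_k(z)-jH_i(x)H_{j-1}(y)H_{k+1}(z).
\]

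The key step is to prove $N$ is nilpotent. For this I would introduce the height function $h(i,j,k)=j+2k$ already suggested in Remark~\ref{ideea}. A direct check shows that both terms in $N(H_iH_jH_k)$ live at height $h+1$: indeed $h(i-1,j+1,k)=j+1+2k$ and $h(i,j-1,k+1)=j-1+2(k+1)=j+1+2k$. Hence $N$ strictly raises height by $1$, and since the height of any basis vector in $\mathcal{H}_m'$ lies in $\{0,1,\dots,2m\}$, we get $N^{2m+1}=0$. (Equivalently, $N$ is strictly upper triangular with respect to any ordering refining height, which is exactly the Hasse-diagram structure pictured in the remark.)

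Once $N$ is known to be nilpotent, invertibility is immediate when $m\neq n$. The scalar $(m-n)c$ is nonzero, so
\[
\bigl((m-n)c\cdot I+N\bigr)^{-1}=\frac{1}{(m-n)c}\sum_{j=0}^{2m}\Bigl(-\frac{1}{(m-n)c}\Bigr)^{j}N^{j},
\]
a finite sum by nilpotency. Applying this operator to $g$ produces the unique $f\in\mathcal{H}_m'$ with $\mathcal{Q}_m(\gamma-A_3)f=g$. The only subtle point is checking the nilpotency via the height function; the rest is routine linear algebra on a finite-dimensional space. No obstacle beyond this bookkeeping is expected.
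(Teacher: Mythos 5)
Your proposal is correct and is essentially the same argument as the paper's: the paper orders the basis $\set{H_i(x)H_j(y)H_k(z)}$ along the directed acyclic graph (i.e., by the height $j+2k$), so the coefficient matrix becomes triangular with nonzero diagonal $(m-n)c$, which is precisely your decomposition into $(m-n)c\cdot I$ plus a nilpotent height-raising part $N$. Your finite Neumann series is just a repackaging of solving that triangular system, so no further comment is needed.
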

\begin{proof}
Suppose that $g=\sum\limits_{i+j+k=m} b_{ijk}H_i(x)H_j(y)H_k(z)$ and $f=\sum\limits_{i+j+k=m} a_{ijk}H_i(x)H_j(y)H_k(z)$. It follows from Eq.(\ref{eq1}) that
\begin{align*}
  & \mathcal{Q}_m(\gamma-A_3)f \\&= \mathcal{Q}_m(\gamma-A_3)\sum_{i+j+k=m} a_{ijk}H_i(x)H_j(y)H_k(z)\\
   &= \sum_{i+j+k=m} a_{ijk}[(m-n)c H_i(x)H_j(y)H_k(z) - i  H_{i-1}(x)H_{j+1}(y)H_k(z)-j H_i(x)H_{j-1}(y)H_{k+1}(z)]\\
   &=\sum\limits_{i+j+k=m} b_{ijk} H_i(x)H_j(y)H_k(z).
\end{align*}
By the linear independent of $\set{H_i(x)H_j(y)H_k(z):\,i+j+k=m}$, we have a system of ${m+1\choose 2 }$ linear equations in ${m+1\choose 2 }$ unknowns.

If we sort $\set{H_i(x)H_j(y)H_k(z):\,i+j+k=m}$ by which appear in the directed acyclic graph in Remark~\ref{ideea}. The coefficient matrix of the linear equations is a lower triangle matrix with nonzero diagonal entry $(m-n)c$. Thus the linear equations have a unique solution.
\end{proof}


Suppose that $f=\sum_{s=0}^{\lfloor \frac{m}{2}\rfloor}f_{m-2s} $ with $f_{s}\in \mathcal{H}_s'$. It follows from Eq.(\ref{eq0}) that
   \begin{eqnarray*}
     (\gamma-A_3)f &=& (\gamma-A_3)\sum_{s=0}^{\lfloor \frac{m}{2}\rfloor}f_{m-2s}  \\
     &=&\sum_{s=0}^{\lfloor \frac{m}{2}\rfloor}[\mathcal{Q}_{m-2s}(\gamma-A_3)f_{m-2s} +\mathcal{Q}_{m-2s-2}(\gamma-A_3)f_{m-2s}]\\
     &=& \mathcal{Q}_{m}(\gamma-A_3)f_{m}+ \sum_{s=1}^{\lfloor \frac{m}{2}\rfloor}\mathcal{Q}_{m-2s}(\gamma-A_3)[f_{m-2s} +f_{m+2-2s}].
   \end{eqnarray*}
   Thus the equation $(\gamma-A_3)f=g$ is equal to a system of equations:
   \begin{eqnarray}
     \mathcal{Q}_{m}(\gamma-A_3)f_{m}&=& g,\label{a1}\\
     \mathcal{Q}_{m-2s}(\gamma-A_3)f_{m-2s}&=&- \mathcal{Q}_{m-2s}(\gamma-A)f_{m+2-2s},\quad s=1,2,\dots,\lfloor \frac{m}{2}\rfloor.\label{a2}
   \end{eqnarray}
   It follows from Lemma~\ref{pp1} that when $m\neq n$, Eq.(\ref{a1}) has a unique solution $f_m\in \mathcal{H}'_m$ and  when $m-2s \neq n$, Eq.(\ref{a2}) has a unique solution $f_{m-2s}\in \mathcal{H}'_{m-2s}$.

 Clearly, if $f$ satisfies $(\gamma-A_3)f=g$, so does $f+h$ where $h$ is any eigenfunction of $A_3$ associated to $\gamma$. Thus we have the following Proposition.
\begin{prop}\label{pp2}
 For any $g\in \mathcal{P}$ with $\mathcal{Q}_{n+2s} g=0,\, s=0,1,2,\dots$, there exist solutions $f\in \mathcal{P}$ to the equation $(\gamma-A_3)f=g$. In addition, if $f$ is the same degree polynomial to $g$ then there exists one and only one solution.
\end{prop}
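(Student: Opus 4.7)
The plan is to exploit the recursive decomposition in (\ref{a1})--(\ref{a2}) already set up in the preceding paragraphs, and to observe that the hypothesis $\mathcal{Q}_{n+2s}g=0$ is exactly what keeps that recursion away from Hermite degree $n$, the one level at which Lemma~\ref{pp1} fails.

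First I would write $g=\sum_{t\leq m}\mathcal{Q}_t g$, where $m$ is the top Hermite degree of $g$ (the case $g=0$ being trivial). Since $\mathcal{Q}_m g\neq 0$ while $\mathcal{Q}_{n+2k}g=0$ for every $k\geq 0$, one has $m\notin\{n,n+2,n+4,\dots\}$; hence either $m<n$, or else $m>n$ with $m\not\equiv n\pmod 2$. In both sub-cases every index $m, m-2, m-4, \dots$ that appears in the recursion differs from $n$.

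Next I would seek $f=\sum_{s=0}^{\lfloor m/2\rfloor}f_{m-2s}$ with $f_{m-2s}\in\mathcal{H}'_{m-2s}$ and solve (\ref{a1})--(\ref{a2}) top down. At $s=0$, equation (\ref{a1}) with $\mathcal{Q}_m g$ on the right admits a unique solution $f_m\in\mathcal{H}'_m$ by Lemma~\ref{pp1}, since $m\neq n$. Proceeding by induction on $s$, the right-hand side of (\ref{a2}) lies in $\mathcal{H}'_{m-2s}$ and, because $m-2s\neq n$, Lemma~\ref{pp1} again yields a unique $f_{m-2s}\in\mathcal{H}'_{m-2s}$. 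Summing gives an $f\in\mathcal{P}$ of the same degree as $g$ satisfying $(\gamma-A_3)f=g$.

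For uniqueness, any other same-degree solution differs from the constructed $f$ by an element of $\ker(\gamma-A_3)$. By Proposition~\ref{pp33} together with the injectivity supplied by Lemma~\ref{pp1} at degrees $n-2, n-4, \dots$, every nonzero eigenfunction must carry a nontrivial $\mathcal{H}'_n$-component. Since the constructed $f$ lies in $\bigoplus_s\mathcal{H}'_{m-2s}$ and $n$ is not among the indices $m-2s$, adding any nonzero eigenfunction would introduce a Hermite component foreign to $g$, so $f$ is the only admissible solution. The sole real obstacle throughout is the non-invertibility of $\mathcal{Q}_n(\gamma-A_3)$ on $\mathcal{H}'_n$, and the hypothesis $\mathcal{Q}_{n+2s}g=0$ is precisely what prevents the recursion from ever landing on this singular degree.
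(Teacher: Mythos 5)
Your argument is correct and takes essentially the same route as the paper: the paper likewise reduces $(\gamma-A_3)f=g$ to the graded system (\ref{a1})--(\ref{a2}), solves it level by level with Lemma~\ref{pp1} (the hypothesis $\mathcal{Q}_{n+2s}\,g=0$ keeping every level of the recursion off the singular degree $n$), and gets uniqueness from the fact that two solutions differ by an eigenfunction of $A_3$. The only cosmetic difference is that you phrase uniqueness through the nonzero $\mathcal{H}_n'$-component of any eigenfunction rather than through its degree being exactly $n$; these coincide in the case $\deg g<n$ in which the proposition is actually applied.
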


\begin{prop}\label{ll3}
Set $\psi_k=\sum\limits_{i=0}^{k}\, (-2)^{k-i} {k \choose i} H_{k-i}(x)H_{2i}(y)H_{n-k-i}(z),\,k=0,1,\cdots,r$. Then it satisfies the equation $\mathcal{Q}_n(\gamma-A_3)\psi_k=0$.
\end{prop}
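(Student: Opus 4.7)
The plan is a direct verification: apply $\mathcal{Q}_n(\gamma-A_3)$ termwise to $\psi_k$ via Eq.(\ref{eq2}), regroup by basis monomial, and check that every coefficient vanishes. The summand $H_{k-i}(x)H_{2i}(y)H_{n-k-i}(z)$ of $\psi_k$ produces two new basis monomials, both of odd $y$-index: an ``up'' piece
\[
-(k-i)\,H_{k-i-1}(x)H_{2i+1}(y)H_{n-k-i}(z)
\]
and a ``down'' piece
\[
-2i\,H_{k-i}(x)H_{2i-1}(y)H_{n-k-i+1}(z).
\]
Boundary indices never escape the valid range: at $i=0$ the down-piece is killed by the factor $-2i$, while at $i=k$ the up-piece is killed by $-(k-i)$. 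For fixed $j\in\{0,1,\dots,k-1\}$, a direct check of the $x$- and $z$-indices shows that the monomial $H_{k-j-1}(x)H_{2j+1}(y)H_{n-k-j}(z)$ receives contributions only from the up-piece at $i=j$ and the down-piece at $i=j+1$.

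Summing these two, the coefficient of $H_{k-j-1}(x)H_{2j+1}(y)H_{n-k-j}(z)$ in $\mathcal{Q}_n(\gamma-A_3)\psi_k$ equals
\[
-(-2)^{k-j}\binom{k}{j}(k-j)\;-\;2(j+1)(-2)^{k-j-1}\binom{k}{j+1}.
\]
Using the elementary identity $(k-j)\binom{k}{j}=(j+1)\binom{k}{j+1}$, this factors as
\[
-(j+1)\binom{k}{j+1}\Bigl[(-2)^{k-j}+2(-2)^{k-j-1}\Bigr],
\]
whose bracket vanishes because $(-2)^{k-j}=-2\cdot(-2)^{k-j-1}$. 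This proves $\mathcal{Q}_n(\gamma-A_3)\psi_k=0$.

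There is no serious obstacle here; the work is entirely bookkeeping, and the cancellation is forced by the Pascal-type identity $(k-j)\binom{k}{j}=(j+1)\binom{k}{j+1}$. In fact, reading the calculation in reverse explains the choice of coefficients $(-2)^{k-i}\binom{k}{i}$ in the definition of $\psi_k$: the constant ratio $-2$ between consecutive coefficients is exactly what is needed for the up- and down-contributions at each odd $y$-index to cancel, which matches the structure of the $n$-th height of the directed acyclic graph in Remark~\ref{ideea}.
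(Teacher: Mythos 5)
Your proof is correct and follows essentially the same route as the paper: both apply Eq.(\ref{eq2}) termwise and regroup by the basis monomials $H_{k-j-1}(x)H_{2j+1}(y)H_{n-k-j}(z)$, where your explicit verification of the cancellation via $(k-j)\binom{k}{j}=(j+1)\binom{k}{j+1}$ is exactly the statement that the coefficient vector $a_i=(-2)^{k-i}\binom{k}{i}$ solves the bidiagonal homogeneous system $\tensor{M}_k a=0$ written down in the paper. The only (inessential) difference is that the paper also records that this solution space is one-dimensional, which your argument does not address but which the statement itself does not require.
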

\begin{proof}
Suppose that $\psi_k=\sum\limits_{i=0}^{k}\,a_{i}  H_{k-i}(x)H_{2i}(y)H_{n-k-i}(z)$. By Eq.(\ref{eq1}), we have that
\begin{align*}
   \mathcal{Q}_n(\gamma-A_3)\psi_k &= \mathcal{Q}_n(\gamma-A_3)\sum_{i=0}^{k}\,a_{i}  H_{k-i}(x)H_{2i}(y)H_{n-k-i}(z) \nonumber \\
   &= \sum_{i=0}^k - a_{i}[(k-i)  H_{k-i-1}(x)H_{2i+1}(y)H_{n-k-i}(z)+2i  H_{k-i}(x)H_{2i-1}(y)H_{n-k-i+1}(z)]\nonumber\\
   &= 0. \label{ll2}
\end{align*}
By the linear independent of $\set{H_i(x)H_j(y)H_k(z)}$, we have a system of $k $ linear homogeneous equations in $k+1 $ unknowns. The coefficient matrix is
  \begin{equation*}
\tensor{M}_k= - \left[
\begin{array}{llllllll}
     k & 2 &0    & 0   & \dots  & 0 & 0& 0\\
     0 & k-1 &4  & 0   & \dots  & 0 & 0& 0\\
     0   & 0 & k-2 &6  & \dots  & 0 & 0& 0\\
     \hdotsfor{8}\\
     0   & 0   & 0   &0    & \dots  &2&2(k-1) &0\\
     0   & 0   & 0   &0    & \dots  &0 &1 &  2k
\end{array}
\right ].
\end{equation*}
Clearly, the solution is 1-dimension and $a_i=(-2)^{k-i} {k \choose i},\,i=0,\dots,k$ is a solution.
\end{proof}
Now suppose that $h=\psi+\phi$ with $\psi\in \mathcal{H}_n'$ and $\phi\in \mathcal{P}_{n-2}$, then by Eq.(\ref{eq0}),
\begin{eqnarray*}
   (\gamma-A_3)h &=&(\gamma-A_3)\psi +(\gamma-A_3)\phi\\
   &=& \mathcal{Q}_n(\gamma-A_3)\psi+[ \mathcal{Q}_{n-2}(\gamma-A_3)\psi + (\gamma-A_3)\phi].
\end{eqnarray*}
Therefore, the equation $(\gamma-A_3)h=0$ is equivalent to two equations:
\begin{eqnarray}
   \mathcal{Q}_n(\gamma-A_3)\psi&=&0,\quad \psi\in \mathcal{H}_n',\label{c1}\\
   (\gamma-A_3)\phi&=&- \mathcal{Q}_{n-2}(\gamma-A_3)\psi,\quad \phi\in \mathcal{P}_{n-2}.\label{c2}
 \end{eqnarray}
By Proposition~\ref{ll3}, Eq.(\ref{c1}) has $1+r$ independent solutions. It follows from Proposition~\ref{pp2} that there exists unique $\phi$ satisfying Eq.(\ref{c2}) for each $\psi$.  Thus we have the following corollary.
\begin{cor}\label{pp7}
The geometric multiplicity of the eigenvalue $\gamma=-nc$ is greater than or equal to $1+r$ (i.e., there are at least $1+r$ independent solutions to the equation $(\gamma-A_3)h=0$).
\end{cor}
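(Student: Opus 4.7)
The plan is just to assemble Propositions~\ref{pp33}, \ref{ll3}, and \ref{pp2} together with the already-derived splitting (\ref{c1})--(\ref{c2}). By Proposition~\ref{pp33}, any eigenfunction lies in $\oplus_{i=0}^{r}\mathcal{H}_{n-2i}'$, so I would write $h = \psi + \phi$ with $\psi \in \mathcal{H}_n'$ and $\phi \in \mathcal{P}_{n-2}$, and note that the equation $(\gamma - A_3)h = 0$ is then equivalent to the coupled pair (\ref{c1}) and (\ref{c2}).

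Next I would use Proposition~\ref{ll3} to obtain the $1+r$ explicit candidates $\psi_0, \psi_1, \dots, \psi_r \in \mathcal{H}_n'$ satisfying (\ref{c1}). For each such $\psi_k$, the right-hand side $g_k := -\mathcal{Q}_{n-2}(\gamma - A_3)\psi_k$ lies in $\mathcal{H}_{n-2}'$ and so trivially satisfies the hypothesis $\mathcal{Q}_{n+2s}g_k = 0$ of Proposition~\ref{pp2}. That proposition then delivers a unique same-degree $\phi_k \in \mathcal{P}_{n-2}$ solving (\ref{c2}), producing an eigenfunction $h_k := \psi_k + \phi_k$ for each $k = 0, 1, \dots, r$.

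The final step is linear independence of $\{h_0, h_1, \dots, h_r\}$. Applying $\mathcal{Q}_n$ to any relation $\sum_{k=0}^{r} c_k h_k = 0$ collapses it to $\sum_{k=0}^{r} c_k \psi_k = 0$. Since the $i = k$ summand of $\psi_k$ is $H_{2k}(y)H_{n-2k}(z)$ (the unique monomial in the entire family $\{\psi_0,\dots,\psi_r\}$ carrying $y$-degree exactly $2k$ with $H_0(x)$ as its $x$-factor), the $\psi_k$ are linearly independent and all $c_k$ must vanish. I expect no genuine obstacle here: the real work has been front-loaded into Propositions~\ref{ll3} and~\ref{pp2}, and this corollary is the bookkeeping observation that the direct-sum decomposition $\mathcal{P}_n = \oplus_m \mathcal{H}_m'$ decouples the top homogeneous component from its lower-degree tail, so each $\mathcal{H}_n'$-solution of (\ref{c1}) extends uniquely to a full eigenfunction in $\ker(\gamma - A_3)$.
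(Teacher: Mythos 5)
Your proposal is correct and follows essentially the same route as the paper: decompose $h=\psi+\phi$ with $\psi\in\mathcal{H}_n'$, $\phi\in\mathcal{P}_{n-2}$, reduce $(\gamma-A_3)h=0$ to the pair (\ref{c1})--(\ref{c2}), solve (\ref{c1}) by Proposition~\ref{ll3} and (\ref{c2}) by Proposition~\ref{pp2}. Your explicit linear-independence check of the $\psi_k$ (via the $H_0(x)H_{2k}(y)H_{n-2k}(z)$ term) is a detail the paper only states later, in the proof of Theorem~\ref{th1}, but it is the same observation.
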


Denote by $h_k,\,k=0,1,\dots,r$ the solutions to the equation $(\gamma-A_3)h=0$ given by Corollary~\ref{pp7}. Clearly, $h_k= \psi_k +(Id- \mathcal{Q}_n)h_k$, where $\psi_k\in \mathcal{H}_n'$ is the same as in Proposition~\ref{ll3}.

Suppose that $f_k=\varphi_k+g_k$ with $\varphi_k\in \mathcal{H}_n'$ and $g_k\in \mathcal{P}_{n-2}$, then
\begin{eqnarray*}
   (\gamma-A_3)^{q_k-1}f_k&=& (\gamma-A_3)^{q_k-1}(\varphi_k+g_k)\\
   &=& \mathcal{Q}_n(\gamma-A_3)^{q_k-1}\varphi_k +  \mathcal{Q}_{n-2}(\gamma-A_3)^{q_k-1}\varphi_k+(\gamma-A_3)^{q_k-1} g_k.
\end{eqnarray*}
Therefore, the equation $(\gamma-A_3)^{q_k-1}f_k=h_k$ is equivalent to two equations:
\begin{eqnarray}
  \mathcal{Q}_n(\gamma-A_3)^{q_k-1}\varphi_k&=& \psi_k,\label{ee00}\\
 (\gamma-A_3)^{q_k-1} g_k&=&(Id- \mathcal{Q}_n)h_k- \mathcal{Q}_{n-2}(\gamma-A_3)^{q_k-1}\varphi_k.\label{ee11}
\end{eqnarray}
 Note that $\psi_k=\sum\limits_{i=0}^{k}b_i  H_{k-i}(x)H_{2i}(y)H_{n-k-i}(z)$. Set $\varphi_k=\sum\limits_{i=0}^{k}a_i H_{n-k-i}(x)H_{2i}(y)H_{k-i}(z)$, then l.h.s. of Eq.(\ref{ee00}) is a linear mapping from $\mathrm{span}\set { H_{n-k-i}(x)H_{2i}(y)H_{k-i}(z):\,i=0,\cdots, k}$ to $\mathrm{span}\set{H_{k-i}(x)H_{2i}(y)H_{n-k-i}(z):\,i=0,\cdots, k}$. The linear mapping is the evolution from the $2k$-height to the $2(n-k)$-height of the directed acyclic graph in Remark~\ref{ideea}, which is represented under the natural basis by a $(k+1)$-square matrix $\tensor{S}_{r-k}$ ( it is the multiplication of some matrices, for details, please refer to Subsection~\ref{ssec2}). By Proposition~\ref{pp20}, the matrix $\tensor{S}_{r-k}$ is nonsingular, which implies that Eq.(\ref{ee00}) has a solution. Since $g_k,\, (Id- \mathcal{Q}_n)h_k \in \mathcal{P}_{n-2}$, it follows from Proposition~\ref{pp2} that Eq.(\ref{ee11}) has a solution. Then we have the following Proposition.

\begin{prop}\label{pp9}
There exists an $f_k\in \mathcal{P}_n$ such that $(\gamma-A)^{q_k-1}f_k=h_k$.
\end{prop}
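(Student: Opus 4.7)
The plan is to mimic the splitting that worked for $h_k = \psi_k + (Id-\mathcal{Q}_n) h_k$: decompose the unknown $f_k \in \mathcal{P}_n$ into a top-degree Hermite-homogeneous part and a lower-degree remainder, and use the identities from Eq.(\ref{eq0}) to reduce the single equation $(\gamma-A_3)^{q_k-1} f_k = h_k$ to one problem purely inside $\mathcal{H}_n'$ and one problem inside $\mathcal{P}_{n-2}$. Concretely, first I would write $f_k = \varphi_k + g_k$ with $\varphi_k \in \mathcal{H}_n'$ and $g_k \in \mathcal{P}_{n-2}$; applying $(\gamma-A_3)^{q_k-1}$ and using the fact that $(\gamma-A_3)$ sends $\mathcal{H}_m'$ into $\mathcal{H}_m' \oplus \mathcal{H}_{m-2}'$, the only contribution to the $\mathcal{H}_n'$-component of $(\gamma-A_3)^{q_k-1} f_k$ comes from $\varphi_k$. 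Matching against $h_k$ therefore isolates Eq.(\ref{ee00}) in $\mathcal{H}_n'$ and the residual Eq.(\ref{ee11}) in $\mathcal{P}_{n-2}$, as displayed in the paragraph preceding the proposition.

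For Eq.(\ref{ee00}) I would try the Ansatz
$$\varphi_k = \sum_{i=0}^k a_i H_{n-k-i}(x) H_{2i}(y) H_{k-i}(z),$$
which sits at height $2k$ in the Hasse diagram of Remark~\ref{ideea}, while the target $\psi_k$ sits at height $2(n-k)$. Since $q_k - 1 = 2n-4k$ equals the graph distance between these heights, $\mathcal{Q}_n(\gamma-A_3)^{q_k-1}$ restricts to a linear map from the $(k+1)$-dimensional source slice to the $(k+1)$-dimensional target slice, represented in the natural Hermite basis by the product $\tensor{S}_{r-k}$ of the single-step transition matrices read off from Eq.(\ref{eq2}). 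Invoking Proposition~\ref{pp20}, which asserts that $\tensor{S}_{r-k}$ is nonsingular, Eq.(\ref{ee00}) admits a solution $\varphi_k$.

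For Eq.(\ref{ee11}), both terms on the right lie in $\mathcal{P}_{n-2}$: the first by definition, the second because $\mathcal{Q}_{n-2}$ projects into $\mathcal{H}_{n-2}'$. In particular, the right-hand side has vanishing $\mathcal{Q}_{n+2s}$-components for all $s \geq 0$, so Proposition~\ref{pp2} applies; iterating it $q_k-1$ times (the solvability hypothesis is preserved at each step because the intermediate solutions remain in $\mathcal{P}_{n-2}$) produces a $g_k \in \mathcal{P}_{n-2}$. Setting $f_k := \varphi_k + g_k$ completes the construction. The genuine obstacle is Proposition~\ref{pp20}; once the nonsingularity of $\tensor{S}_{r-k}$ is in hand, the present statement is a clean repackaging of the projector calculus already established in Proposition~\ref{pp1}, Proposition~\ref{pp2} and Proposition~\ref{ll3}.
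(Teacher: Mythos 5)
Your proposal follows essentially the same route as the paper: the same decomposition $f_k=\varphi_k+g_k$ with $\varphi_k\in\mathcal{H}_n'$, $g_k\in\mathcal{P}_{n-2}$, the same reduction to Eq.(\ref{ee00}) and Eq.(\ref{ee11}), the same Ansatz for $\varphi_k$ placing the problem between heights $2k$ and $2(n-k)$ of the graph, and the same appeals to Proposition~\ref{pp20} and Proposition~\ref{pp2}. Your explicit remark that Proposition~\ref{pp2} must be iterated $q_k-1$ times, with the solvability hypothesis preserved because the intermediate solutions stay in $\mathcal{P}_{n-2}$, is a small but welcome elaboration of a step the paper leaves implicit.
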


\noindent{\it Proof of Theorem \ref{th1}.\,}
 Note that $\psi_k=\mathcal{Q}_n h_k$ in Proposition~\ref{ll3} are linear independent, so does the eigenfunctions $h_k$. Let $\set{f_k,\,k=0,1,\cdots, r}$ be as in Proposition~\ref{pp9}. Then the generalised eigenfunctions $\set{(\gamma-A)^j f_k:\,j=0,1,\cdots,q_k-1,\,k=0,1,\cdots,r}$ are linear independent (please refer to the proof of \cite[pp264, Theorem 6.2.]{ls}).

Note that $q_k=2n+1-4k$, then the algebraic multiplicity of the eigenvalue $\gamma$ can be decomposed to
\begin{equation}\label{decom1}
  {n+2 \choose 2}=\sum_{k=0}^{r} (2n+1-4k)=\sum_{k=0}^{r} q_k.
\end{equation}
Thus $\set{(\gamma-A)^j f_k:\,j=0,1,\cdots,q_k-1,\,k=0,1,\cdots,r}$ forms a basis of the spectral subspace associated to $\gamma$.
Together with Proposition~\ref{pp7}, we have that the geometric multiplicity of the eigenvalue $\gamma$ is equal to $r+1$ (Otherwise, the algebraic multiplicity should be greater than ${n+1 \choose 2}$). Then $\set{h_k,\,k=0,\dots, r}$ forms the basis of the eigenspace of $\gamma$. Eq.(\ref{eqq}) is exactly
 the conclusion of Proposition~\ref{ll3}.

{\hfill\large{$\Box$}}

\subsection{Linear mapping represented by the multiplication of some matrices.}\label{ssec2}
For example, by Eq.(\ref{eq2}), when $n$ is odd, the evolution from the $(n-1)$-height to the $n$-height (the $n$-height to the $(n+1)$-height) of the directed acyclic graph is
\begin{align*}
& \mathcal{Q}_n(\gamma-A_3)\sum_{i=0}^{r}a_i H_{r+1-i}(x)H_{2i}(y)H_{r-i}(z)\\
   &=\sum_{i=0}^{r}a_i( -(r+1-i)  H_{r-i}(x)H_{1+2i}(y)H_{r-i}(z)- 2i H_{r+1-i}(x)H_{2i-1}(y)H_{r+ 1-i}(z))\\
   &=\sum_{i=0}^{r}( -(r+1-i)a_{i} - (2+2i)a_{i+1} )H_{r-i}(x)H_{1+2i}(y)H_{r-i}(z),\quad (\text { where $a_{r+1}=0$})\\
& \mathcal{Q}_n(\gamma-A_3)\sum_{i=0}^{r}b_i H_{r-i}(x)H_{1+2i}(y)H_{r-i}(z)\\
   &=\sum_{i=0}^{r}( -(r+1-i)b_{i-1} - (1+2i)b_i )H_{r-i}(x)H_{2i}(y)H_{r+ 1-i}(z).\quad (\text { where $b_{-1}=0$})
\end{align*}
Then the matrices associated to the linear mappings are $-\tensor{D}_0, -\tensor{A}_0$ (see below). When $n$ is even, the evolution from the $(n-1)$-height to the $n$-height (the $n$-height to the $(n+1)$-height) of the directed acyclic graph is
\begin{align*}
& \mathcal{Q}_n(\gamma-A_3)\sum_{i=0}^{r-1}c_i H_{r-i}(x)H_{2i+1}(y)H_{r-1-i}(z)\\
   &=\sum_{i=0}^{r}( -(r+1-i)c_{i-1} - (1+2i)c_{i} )H_{r-i}(x)H_{2i}(y)H_{r-i}(z), \quad (\text { where $c_{-1}=c_r=0$})\\
& \mathcal{Q}_n(\gamma-A_3)\sum_{i=0}^{r}d_i H_{r-i}(x)H_{2i}(y)H_{r-i}(z)\\
   &=\sum_{i=0}^{r-1}( -(r-i)d_i - (2+2i)d_{i+1} ) H_{r-i-1}(x)H_{2i+1}(y)H_{r-i}(z).
\end{align*}
Then the matrices associated to the linear mappings are $-\tensor{B}_1, -\tensor{C}_1$ (see below).

The others are similar. In general, the matrix $\tensor{S}_{r-k}$ (see Proposition~\ref{pp9}) associated to the linear mapping of the evolution from the $(2k)$-height to the $2(n-k)$-height of the directed acyclic graph is:
\begin{prop}\label{pp20}
 Let $r=\lfloor \frac{n}{2}\rfloor$. If $n$ is odd, then suppose that
$\tensor{S}_k$ is an $(r+1-k)$-square matrix given by
\begin{equation}\label{sk1}
   \left\{
      \begin{array}{ll}
  \tensor{S}_0&=\tensor{D}_0\tensor{A}_0,\\
    \tensor{S}_{k}&=\tensor{D}_k\tensor{C}_k\tensor{S}_{k-1}\tensor{B}_k\tensor{A}_k,
    \quad k=1,2,\dots,r,
       \end{array}
\right.
  \end{equation}
where $\tensor{D}_k, \tensor{A}_k$ are $(r+1-k)$-square matrixes,
      \begin{equation}
\tensor{A}_k= \left[
\begin{array}{llllllll}
     r+k+1 & 2 &0    & 0   & \dots  & 0 & 0& 0\\
     0 & r+k &4  & 0   & \dots  & 0 & 0& 0\\
     0   & 0 & r+k-1 &6  & \dots  & 0 & 0& 0\\
     \hdotsfor{8}\\
     0   & 0   & 0   &0    & \dots  &0&2k+2 &n- (2k+1)\\
     0   & 0   & 0   &0    & \dots  &0 &0  & 2k+1
\end{array}
\right ],
\end{equation}
and
  \begin{equation}
\tensor{D}_k= \left[
\begin{array}{llllllll}
     1 & 0 &0    & 0   & \dots  & 0 & 0& 0\\
     r-k & 3 &0  & 0   & \dots  & 0 & 0& 0\\
     0   & r-k-1 &5 &0  & \dots  & 0 & 0& 0\\
     \hdotsfor{8}\\
     0   & 0   & 0   &0    & \dots  &2&n-2k-2  &0\\
     0   & 0   & 0   &0    & \dots  &0 &1  & n-2k
\end{array}
\right ],
\end{equation}
and $\tensor{B}_k,\,\tensor{C}_k$ are $(r+2-k)\times (r+1-k),\,(r+1-k)\times (r+2-k)$ matrixes respectively,
  \begin{equation}
\tensor{B}_k= \left[
\begin{array}{llllllll}
     1 & 0 &0    & 0   & \dots  & 0 & 0& 0\\
     r+k & 3 &0  & 0   & \dots  & 0 & 0& 0\\
     0   & r+k-1 &5 &0  & \dots  & 0 & 0& 0\\
     \hdotsfor{8}\\
     0   & 0   & 0   &0    & \dots  &2k+2 &n-2k-2  &0\\
     0   & 0   & 0   &0    & \dots  &0 &2k+1  & n-2k\\
     0   & 0   & 0   &0    & \dots  &0 &0  & 2k\\
\end{array}
\right ],
\end{equation}
and
\begin{equation}
\tensor{C}_k= \left[
\begin{array}{llllllll}
     r-k+1 & 2 &0    & 0   & \dots  & 0 & 0& 0\\
     0 & r-k &4  & 0   & \dots  & 0 & 0& 0\\
     0   & 0 & r-k-1 &6  & \dots  & 0 & 0& 0\\
     \hdotsfor{8}\\
     0   & 0   & 0   &0    & \dots  &2& n- 2k-1 &0\\
     0   & 0   & 0   &0    & \dots  &0 &1 &  n- 2k+1
\end{array}
\right ].
\end{equation}
If $n$ is even, then suppose that $\tensor{S}_k$ is an $(r+1-k)$-square matrix given by
\begin{equation}\label{sk2}
   \left\{
      \begin{array}{ll}
      \tensor{S}_0&=\mathrm{Id}_{r+1},\\
      \tensor{S}_{k}&=\tensor{D}_k\tensor{C}_k\tensor{S}_{k-1}\tensor{B}_k\tensor{A}_k,
    \quad k=1,2,\dots,r,
     \end{array}
\right.
  \end{equation}
where $\tensor{D}_k, \tensor{A}_k$ are $r+1-k$ order matrixes,
      \begin{equation}
\tensor{A}_k= \left[
\begin{array}{llllllll}
     r+k & 2 &0    & 0   & \dots  & 0 & 0& 0\\
     0 & r+k-1 &4  & 0   & \dots  & 0 & 0& 0\\
     0   & 0 & r+k-2 &6  & \dots  & 0 & 0& 0\\
     \hdotsfor{8}\\
     0   & 0   & 0   &0    & \dots  &0&2k+1 &n- 2k\\
     0   & 0   & 0   &0    & \dots  &0 &0  & 2k
\end{array}
\right ],
\end{equation}
and
  \begin{equation}
\tensor{D}_k= \left[
\begin{array}{llllllll}
     1 & 0 &0    & 0   & \dots  & 0 & 0& 0\\
     r-k & 3 &0  & 0   & \dots  & 0 & 0& 0\\
     0   & r-k-1 &5 &0  & \dots  & 0 & 0& 0\\
     \hdotsfor{8}\\
     0   & 0   & 0   &0    & \dots  &2&n-2k-1  &0\\
     0   & 0   & 0   &0    & \dots  &0 &1  & n+1-2k
\end{array}
\right ],
\end{equation}
and $\tensor{B}_k,\,\tensor{C}_k$ are $(r+2-k)\times (r+1-k),\,(r+1-k)\times (r+2-k)$ matrixes respectively,
  \begin{equation}
\tensor{B}_k= \left[
\begin{array}{llllllll}
     1 & 0 &0    & 0   & \dots  & 0 & 0& 0\\
     r+k-1 & 3 &0  & 0   & \dots  & 0 & 0& 0\\
     0   & r+k-2 &5 &0  & \dots  & 0 & 0& 0\\
     \hdotsfor{8}\\
     0   & 0   & 0   &0    & \dots  &2k+2 &n-2k-1  &0\\
     0   & 0   & 0   &0    & \dots  &0 &2k  & n+1-2k\\
     0   & 0   & 0   &0    & \dots  &0 &0  & 2k-1\\
\end{array}
\right ],
\end{equation}
and
\begin{equation}
\tensor{C}_k= \left[
\begin{array}{llllllll}
     r-k+1 & 2 &0    & 0   & \dots  & 0 & 0& 0\\
     0 & r-k &4  & 0   & \dots  & 0 & 0& 0\\
     0   & 0 & r-k-1 &6  & \dots  & 0 & 0& 0\\
     \hdotsfor{8}\\
     0   & 0   & 0   &0    & \dots  &2& n- 2k&0\\
     0   & 0   & 0   &0    & \dots  &0 &1 &  n- 2k+2
\end{array}
\right ].
\end{equation}
Then the matrices $\tensor{S}_k$ are nonsingular.
\end{prop}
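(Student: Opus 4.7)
I plan to argue by induction on $k$, using the recursive formulas~\eqref{sk1} for $n$ odd and~\eqref{sk2} for $n$ even. The base case is immediate: when $n$ is even, $\tensor{S}_0=\mathrm{Id}_{r+1}$ is nonsingular by inspection; when $n$ is odd, $\tensor{S}_0=\tensor{D}_0\tensor{A}_0$ is a product of two square bidiagonal matrices (one lower, one upper) whose diagonals are strictly positive integers, so its determinant equals the (nonzero) product of those diagonals.

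For the inductive step, assume $\tensor{S}_{k-1}$ is nonsingular and consider
\begin{equation*}
\tensor{S}_k=\tensor{D}_k\,\tensor{C}_k\,\tensor{S}_{k-1}\,\tensor{B}_k\,\tensor{A}_k.
\end{equation*}
The square factors $\tensor{A}_k$ (upper bidiagonal) and $\tensor{D}_k$ (lower bidiagonal) have strictly positive integer diagonals, hence are nonsingular, and the problem reduces to showing that $\tensor{M}_k := \tensor{C}_k\,\tensor{S}_{k-1}\,\tensor{B}_k$ is a nonsingular $(r+1-k)$-square matrix. Here the real obstruction is rectangularity: $\tensor{B}_k$ has size $(r+2-k)\times(r+1-k)$ and $\tensor{C}_k$ has size $(r+1-k)\times(r+2-k)$, so one cannot multiply determinants factor-by-factor. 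I would apply the Cauchy--Binet formula,
\begin{equation*}
\det\tensor{M}_k=\sum_{I,\,J}\det\bigl(\tensor{C}_k[\,\cdot\,,\,I]\bigr)\,\det\bigl(\tensor{S}_{k-1}[I,\,J]\bigr)\,\det\bigl(\tensor{B}_k[J,\,\cdot\,]\bigr),
\end{equation*}
with $I,J$ ranging over $(r+1-k)$-subsets of $\{1,\dots,r+2-k\}$, each specified by the single index it omits. Because $\tensor{B}_k$ is lower bidiagonal with strictly positive entries, deleting one row splits the remainder into two triangular blocks with a closed-form positive product, and similarly for $\tensor{C}_k$; all outer factors in the Cauchy--Binet sum are therefore positive, and the sign of $\det\tensor{M}_k$ is driven entirely by the signs of the minors of $\tensor{S}_{k-1}$.

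The main obstacle is showing that these minors of $\tensor{S}_{k-1}$ combine without cancellation. My preferred resolution is the Lindstr\"om--Gessel--Viennot (LGV) lemma applied to the weighted directed acyclic graph of Remark~\ref{ideea}. By Eq.~\eqref{eq2}, $\tensor{S}_k$ is precisely the matrix whose $(i,j)$ entry is the sum, over all paths of length $2n-4k$ from the $i$-th source at height $2k$ to the $j$-th sink at height $2(n-k)$, of the product of edge weights, each of the form $-i$ or $-j$; since every such path has even length, it contributes a positive product, and LGV then writes $\det\tensor{S}_k$ as a signed sum over non-intersecting path systems. The planar embedding of this DAG, together with the compatible lex-orderings of the source and sink layers, should force only the identity permutation to admit non-intersecting systems, each contributing a strictly positive term, yielding $\det\tensor{S}_k>0$. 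The delicate step is exactly the verification that any two paths joining different source--sink pairs in a non-identity permutation must share a vertex; should the LGV route prove awkward at the boundary cases, the fallback is a minor-tracking induction that strengthens the hypothesis to a total-positivity--like sign pattern on the relevant minors of $\tensor{S}_{k-1}$, preserved by the recursion because every entry of $\tensor{A}_k,\tensor{B}_k,\tensor{C}_k,\tensor{D}_k$ is non-negative.
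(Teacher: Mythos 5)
Your primary route (LGV on the weighted DAG of Remark~\ref{ideea}) is genuinely different from the paper's proof, which never argues via paths: the paper first shows that \emph{all} minors of the bidiagonal factors $\tensor{A}_k,\tensor{B}_k,\tensor{C}_k,\tensor{D}_k$ are nonnegative (any square submatrix is a direct sum of triangular blocks with nonnegative entries), then deduces by iterated Cauchy--Binet and induction that all minors of $\tensor{S}_k$ are nonnegative, and finally gets strictness from
\begin{equation*}
\det (\tensor{C}_k\tensor{S}_{k-1}\tensor{B}_k)=\sum_{i,j} \det \tensor{C}_k(|i)\,\det\tensor{S}_{k-1}(i|j)\,\det \tensor{B}_k(j|),
\end{equation*}
using that the border minors $\det \tensor{C}_k(|i)$, $\det \tensor{B}_k(j|)$ are positive and that $\det\tensor{S}_{k-1}\neq 0$ forces, by cofactor expansion, at least one $\det\tensor{S}_{k-1}(i|j)$ to be nonzero, hence positive. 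Your LGV idea is attractive and in principle stronger: in the coordinates $(i,k)$ the graph is a directed grid with unit ``west'' and ``north'' steps, the sources and sinks sit on two parallel diagonals, so the classical crossing argument does yield ``only the identity permutation,'' and a completed LGV argument would establish total nonnegativity of $\tensor{S}_k$ directly, with no need for the recursion at all.

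As written, though, there are two concrete gaps. First, LGV plus the only-identity property gives $\det\tensor{S}_k$ as a sum of nonnegative terms, hence only $\det\tensor{S}_k\geqslant 0$; strict positivity additionally requires exhibiting at least one vertex-disjoint path family all of whose edges carry nonzero weight (for instance the nested ``L-shaped'' family in which each path takes all its west steps first and then all its north steps), and your sketch addresses uniqueness of the permutation but never existence of such a family. Second, the fallback you describe is insufficient as stated: nonnegativity of the \emph{entries} of $\tensor{A}_k,\tensor{B}_k,\tensor{C}_k,\tensor{D}_k$ does not propagate a sign pattern on minors through the recursion --- one needs nonnegativity of all their \emph{minors} (this is exactly where the paper uses the bidiagonal structure), and even then one only obtains $\det\tensor{S}_k\geqslant 0$, so the cofactor/rank argument quoted above is still needed to exclude vanishing. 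Either carry the LGV program to completion (the crossing lemma at the boundary plus an explicit non-intersecting family), or supply these two missing ingredients in the matrix-theoretic fallback.
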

\begin{rem}
Set the column vector $\vec{u}_k=\big[(-2)^{k},\, {k \choose k-1}(-2)^{k-1},\,\dots, {k \choose 2}(-2)^{2},\,{k \choose 1}(-2) ,\,1\big]'$, where $k=0,1,\dots, r$.  We conjecture that the column vector $\vec{u}_{r-k}$ is the eigenvector of $\tensor{S}_k$ associated to the eigenvalue $\lambda_k$ which is defined by:\\
when $n$ is odd, \begin{equation}
   \left\{
      \begin{array}{ll}
      \lambda_0 &= 1,\\
      \lambda_{k} &= 2k (2k+1)(4k-1)(4k+1)\lambda_{k-1},\quad k=1,2,\dots, r .
      \end{array}
\right.
\end{equation}
when $n$ is even,
\begin{equation*}
   \left\{
      \begin{array}{ll}
      \lambda_0 &= 1,\\
      \lambda_{k} &= 2k (2k-1)(4k-3)(4k-1)\lambda_{k-1},\quad k=1,2,\dots, r .
      \end{array}
\right.
\end{equation*}
If the conjecture is valid, then we can characterize the leader vectors $f_k$ more clearly, i.e.,
\begin{equation*}
  \mathcal{Q}_n f_{k} = \sum\limits_{i=0}^{k}\,(-2)^{k-i} {k \choose i} H_{n-k-i}(x)H_{2i}(y)H_{k-i}(z).
\end{equation*}
\end{rem}
Apply the notation in \cite{hk,zfz}.
Let $\tensor{A}$ be an $p\times q$ matrix, $\alpha= \set{i_1, \dots, i_s}$, and $\beta = \set{j_1, . . . , j_t}$,
$1 \leqslant i_1 <\dots < i_s\leqslant p, 1 \leqslant j_1 <\dots < j_t \leqslant q$.
Denote by
$\tensor{A}[i_1,\dots , i_s; j_1,\dots, j_t]$ , or simply $\tensor{A}[\alpha, \beta]$, the submatrix of $\tensor{A}$ consisting
of the entries in rows $i_1, \dots , i_s$ and columns $j_1, \dots , j_t$. Let $\tensor{A}(i_1,\dots , i_s| j_1,\dots, j_t)$ be the submatrix of $\tensor{A}$ obtained by deleting rows $i_1, \dots , i_s$ and columns $j_1, \dots , j_t$. For convenience, $\tensor{A}(i_1,\dots , i_s| )$ ($\tensor{A}(| j_1,\dots, j_t)$) means by deleting rows (columns) only.
\begin{proof}
We divide the proof into three steps.\\
Claim 1: All the minors (i.e.,the determinant of the square submatrix )of the matrixes $\tensor{A_k},\,\tensor{B}_k,\,\tensor{C}_k,\,\tensor{D}_k,\,k=1,\dots,r,$ are nonnegative.
Since $\tensor{A}_k$ ($\tensor{B}_k$) and $\tensor{D}_k^{T}$ ($\tensor{C}_k^{T}$) have the same types, we only need to prove the cases of $\tensor{A}_k\, ,\tensor{B}_k$. The square submatrix of $\tensor{B}_k$ are the same to that of a certain $\tensor{B}_k(i|),\,i=1,\dots,r+2-k$ which is a direct sum of two matrices \cite{zfz} with the same type of $\tensor{D}_k$ or $\tensor{A}_k$.
Thus we only need to prove the case of $\tensor{A}_k$.
In fact, $\tensor{A}[i_1,\dots , i_s; j_1,\dots, j_s]$ is a 
direct sum \cite{zfz} of several nonnegative triangular matrices, which implies that $\tensor{A}[i_1,\dots , i_s; j_1,\dots, j_s]$ has nonnegative determinant.

Claim 2: All the minors of $\tensor{S}_k$ are nonnegative. In fact, we have the following type Binet-Cauchy formula:
\begin{align}\label{bc}
  \det (\tensor{AB})[\alpha,\beta]=\sum_{\kappa} \det \tensor{A}[\alpha,\kappa]\times \det \tensor{A}[\kappa, \beta],
\end{align}
where $\tensor{A}$ ($\tensor{B}$) is $m\times n$ ($n\times m$) matrix, $m\leqslant n$, $\kappa$ runs over all the sequences of $\set{1,\dots,n}$ with the same length of $\alpha,\beta$. By successively applying Eq.(\ref{bc}), we have
\begin{align*}
  \det \tensor{S}_k[\alpha,\beta]&=\det (\tensor{D}_k\tensor{C}_k\tensor{S}_{k-1}\tensor{B}_k\tensor{A}_k)[\alpha,\beta]\\
  &=\sum_{\kappa_1,\kappa_2,\kappa_3,\kappa_4} \det \tensor{D}_k[\alpha,\kappa_1] \det \tensor{C}_k[\kappa_1,\kappa_2]  \det \tensor{S}_{k-1}[\kappa_2,\kappa_3]\det\tensor{B}_k[\kappa_3,\kappa_4] \det \tensor{A}_k[\kappa_4,\beta].
\end{align*}
Together with Claim 1 and by the mathematical induction, we have $\det \tensor{S}_k[\alpha,\beta]\geqslant 0$.

Claim 3: $\det\tensor{S}_k>0$. Clearly, $\det\tensor{S}_0>0$. Since $\det \tensor{A}_k,\,\det \tensor{D}_k>0$, we need only prove that $\det (\tensor{C}_k\tensor{S}_{k-1}\tensor{B}_k)>0$. By the Binet-Cauchy formula, we have
\begin{align*}
   \det (\tensor{C}_k\tensor{S}_{k-1}\tensor{B}_k)=\sum_{i,j} \det \tensor{C}_k(|i)\det\tensor{S}_{k-1}(i|j)\det \tensor{B}_k(j|).
\end{align*}
Clearly, $\det \tensor{C}_k(|i),\,\det \tensor{B}_k(j|)>0$. By the induction assumption $\det\tensor{S}_{k-1}\neq 0$, Claim 2 implies that there exists at least one $\det\tensor{S}_{k-1}(i|j)>0$. This ends the proof.
\end{proof}




\begin{thebibliography}{10}

 \bibitem{cb} Chojnowska-Michalik A., Goldys B., Nonsymmetric Ornstein-Uhlenbeck semigroup as second quantized operator, J. Math. Kyoto Univ. 36, no. 3, 481- 498, 1996.
\bibitem{hk} Hoffman K., Kunze R., \textit{Linear Algebre}, 2rd Ed., Prentice-Hall, Inc., 1971.

\bibitem{guo} Kuo H. H., \textit{Introduction to Stochastic Integration}, Springer, 2006.

\bibitem{ls}Lang S., \textit{Linear Algebre}, 3rd Ed., Springer, 2004.

\bibitem{LA} Levelt A. H. M., Jordan decomposition for a class of singular differential operators, Arkiv f$\ddot{o}$r Matematik, 1975, Volume 13, Issue 1-2, pp 1-27.
\bibitem{lun}Lunardi A. On the Ornstein-Uhlenbeck operator in $L^2$ spaces with respect to invariant measures, Trans. Amer. Math. Soc. 349, 155-169, 1997.



\bibitem{mpj}Mathew P. J., ``On Some Aspects of the Differential Operator" (2006). Mathematics Theses. Paper 12.

\bibitem{Mg} Metafune G., Pallara D., Spectrum of Ornstein-Uhlenbeck operators in $L^p$ spaces with respect to invariant measures,
Journal of Functional Analysis 196, 40-60, 2002.

\bibitem{Rs}Roman S., \textit{Advanced Linear Algebre (GTM135)}, 3rd Ed., Springer, 2008.

\bibitem{zfz}Zhang F.Z., \textit{Matrix Theory, Basic Results and Techniques}, 2nd Ed.,Springer, 2011.


\end{thebibliography}
\end{document}